\numberwithin{equation}{section}
\theoremstyle{plain}
\newtheorem{lemma}[equation]{Lemma}
\newtheorem{proposition}[equation]{Proposition}
\newtheorem{corollary}[equation]{Corollary}
\theoremstyle{definition}
\setlist[enumerate]{label=(\arabic*), leftmargin=*}
\setlist[itemize]{label=$\vcenter{\hbox{\footnotesize$\bullet$}}$, leftmargin=*}
\newcommand{\mb}[1]{\mathbf{#1}}
\newcommand{\cat}[1]{
\StrLen{#1}[\mystrlen]
\ifnum\mystrlen=1 \mathscr{#1}
\else \mathrm{#1}
\fi}
\newcommand{\scat}[1]{\mb{#1}}
\newcommand{\cB}{\mathcal{B}}
\newcommand{\cA}{\mathcal{A}}
\newcommand{\cC}{\mathcal{C}}
\newcommand{\mytag}[2]{%
  \text{#1}%
  \@bsphack
  \begingroup
    \@onelevel@sanitize\@currentlabelname
    \edef\@currentlabelname{%
      \expandafter\strip@period\@currentlabelname\relax.\relax\@@@%
    }%
    \protected@write\@auxout{}{%
      \string\newlabel{#2}{%
        {#1}%
        {\thepage}%
        {\@currentlabelname}%
        {\@currentHref}{}%
      }%
    }%
  \endgroup
  \@esphack
}
\title{Topological field theories associated with Calabi--Yau categories}
\author[]{
	Tristan Bozec\thanks{LAREMA, Univ. Angers, CNRS, Angers, France \\
											\href{mailto:tristan.bozec@univ-angers.fr}{tristan.bozec@univ-angers.fr}}, 
	Damien Calaque\thanks{IMAG, Univ. Montpellier, CNRS, Montpellier, France \\
											\href{mailto:damien.calaque@umontpellier.fr}{damien.calaque@umontpellier.fr}}, 
	Sarah Scherotzke\thanks{Mathematical Institute, University of Luxembourg, Luxembourg \\
											\href{mailto:sarah.scherotzke@uni.lu}{sarah.scherotzke@uni.lu}}}
\date{}
\begin{document}

\maketitle

\begin{abstract}
We construct symmetric monoidal higher categories of iterated Calabi--Yau cospans, that are noncommutative analogs of iterated lagrangian correspondences. 
We actually give a general (and functorial) procedure that applies to iterated nondegenerate cospans on certain comma categories. 
This allows us to factor the AKSZ fully extended TFT associated with the moduli of objects of a Calabi--Yau category (taking values in iterated lagrangian correspondences) through a fully extended TFT taking values in iterated Calabi--Yau cospans. 
\end{abstract}

\setcounter{tocdepth}{2}
\tableofcontents

\makeatletter
\@input{CY-TFT-Introduction}
\makeatother 

\makeatletter
\@input{CY-TFT-Section1}
\makeatother

\makeatletter
\@input{CY-TFT-Section2}
\makeatother

\makeatletter
\@input{CY-TFT-Section3}
\makeatother

\appendix

\makeatletter
\@input{CY-TFT-Appendix}
\makeatother 


\section{Proof of an $(\infty,2)$-categorical lemma}\label{appendix}

This section is devoted to the proof of~\cite[Lemma 5.33]{BCS}. 
See~\cite[Theorem 2.2.23]{Grataloup} for an analogous result in the shifted symplectic/lagrangian context. 
We begin with the following lemma, analogous to~\cite[Proposition 2.17]{ABB}

Consider a $4$-tuple $(\cat{C},\mathcal G,\Delta,\mathcal F)$ as in Section 2, and set $\mathbf C_m^n=\mathrm{coSpan}_m^\mathrm{ndg}(k[n]\!\downarrow\!\mathcal F\Delta)$.

\begin{lemma}\label{ndequiv}
	Consider two cospans $\phi:\Upsilon\to\cA$ and $\psi:\Upsilon\to\cB$ in $\mathbf C_1^n(\Upsilon,\varnothing)$. Denote by $\mathcal P=\cA\amalg_{\Upsilon}\cB$ their composition in $\mathbf C_1^n(\varnothing,\varnothing)=\mathbf C_0^{n+1}$.
	Denote by $c:k[n]\to\mathcal F\Delta(\Upsilon)$ the non-degenerate structure on $\Upsilon$, and by $\eta_\cA$ and $\eta_\cB$ the homotopies $\phi(c)\sim0$ and $\psi(c)\sim0$. Consider a morphism $
	\nabla:\mathcal P\rightarrow \mathcal X$
	in $\cC$ such that $\alpha:=\nabla\circ\iota_\cA$ and $\beta:=\nabla\circ\iota_\cB$ are equivalences, where $\iota_A$ and $\iota_B$ denote the natural morphisms $\cA\to\mathcal P$ and $\cB\to\mathcal P$ in $\cC$. If we are given an homotopy $\xi$ between $\alpha(\eta_\cA)$ and $\beta(\eta_\cB)$, then $\nabla\in\mathbf C_1^{n+1}(\mathcal P,\varnothing)$.
\end{lemma}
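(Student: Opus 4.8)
The aim is to promote $\nabla$, together with suitable coherence data, to a non-degenerate cospan at level $n+1$ (that is, to show $\nabla\in\mathbf C_1^{n+1}(\mathcal P,\varnothing)$). Concretely there are two tasks: (i) to equip the cospan $\mathcal P\xrightarrow{\nabla}\mathcal X\leftarrow\varnothing$ with an isotropic structure, i.e.\ a nullhomotopy of $\nabla(c_{\mathcal P})$ in $\mathcal F\Delta(\mathcal X)$ --- the comparison with the trivial structure on $\varnothing$ being vacuous --- where $c_{\mathcal P}\colon k[n+1]\to\mathcal F\Delta(\mathcal P)$ denotes the composite (non-degenerate) structure on $\mathcal P$; and (ii) to check that the resulting cospan satisfies the non-degeneracy condition of Section~2.

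For (i): by the very construction of the composition $\mathcal P=\cA\amalg_\Upsilon\cB$ in $\mathbf C_0^{n+1}$, equivalently by the identification $\mathbf C_1^n(\varnothing,\varnothing)\simeq\mathbf C_0^{n+1}$, the class $c_{\mathcal P}$ is the $(n+1)$-shifted class obtained by gluing the nullhomotopies $\eta_\cA$ of $\phi(c)$ and $\eta_\cB$ of $\psi(c)$ over $c$ inside $\mathcal F\Delta(\mathcal P)$. Pushing this forward along $\nabla$ and using $\nabla\circ\iota_\cA=\alpha$, $\nabla\circ\iota_\cB=\beta$ and $\iota_\cA\circ\phi\simeq\iota_\cB\circ\psi$, one finds that $\nabla(c_{\mathcal P})$ is the $(n+1)$-shifted class on $\mathcal F\Delta(\mathcal X)$ glued from the two nullhomotopies $\alpha(\eta_\cA)$ and $\beta(\eta_\cB)$ of $\gamma(c)$, where $\gamma:=\alpha\circ\phi\simeq\beta\circ\psi\colon\Upsilon\to\mathcal X$. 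Since a homotopy between these two nullhomotopies is exactly a nullhomotopy of the glued $(n+1)$-shifted class, the homotopy $\xi$ furnishes the required isotropic structure.

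For (ii) I would argue by pasting, deducing the non-degeneracy of $\nabla$ from that of $\phi$ and $\psi$. By the universal property of $\mathcal P=\cA\amalg_\Upsilon\cB$, the morphism $\nabla$ corresponds to the commuting square with sides $\phi,\psi,\alpha,\beta$ (commutativity being part of the datum of $\nabla$). The non-degeneracy condition for the cospan at level $n+1$ demands that a certain square --- formed, as in Section~2, out of $\cG\Delta$ applied to the cospan diagram together with the duality and the shift by $n$ --- be cartesian, equivalently that a comparison map be an equivalence. I would assemble this square as a composite of: the non-degeneracy squares of $\phi$ and of $\psi$, cartesian by hypothesis; the square obtained by applying $\cG\Delta$ to the pushout square $\Upsilon\to\cA$, $\Upsilon\to\cB$, $\cA\to\mathcal P\leftarrow\cB$, which is cartesian because $\cG\Delta$ carries that pushout to a limit square (as built into the framework of Section~2); and squares built from $\cG\Delta(\alpha)$ and $\cG\Delta(\beta)$, invertible because $\alpha$ and $\beta$ are equivalences. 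The pasting law for cartesian squares then gives the claim; here $\xi$ reappears, to make the relevant faces commute coherently and to identify the isotropic structure of (i) with the nullhomotopy entering the non-degeneracy square. If convenient, one may first transport along $\alpha,\beta$ so as to reduce to the case $\cB=\cA$, $\psi=\phi$ and $\nabla$ the fold map $\cA\amalg_\Upsilon\cA\to\cA$, which streamlines the combinatorics but not the essential point.

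The main obstacle is exactly this last pasting: it must be carried out coherently in the $(\infty,2)$-categorical setting, and it requires careful bookkeeping of the shift and of the duality relating the level-$n$ non-degeneracy of $\phi,\psi$ to the level-$(n+1)$ non-degeneracy of $\nabla$ --- the extra $1$ being precisely the shift produced by the pushout and recorded in $\mathbf C_1^n(\varnothing,\varnothing)\simeq\mathbf C_0^{n+1}$. A secondary point deserving care is to verify that the nullhomotopy of $\nabla(c_{\mathcal P})$ extracted from $\xi$ in (i) is genuinely the one occurring in the non-degeneracy square of (ii), and not merely some nullhomotopy; this too comes down to tracing through the identifications, using the coherence carried by $\xi$.
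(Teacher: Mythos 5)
Your part (i) matches the paper's first step: $\xi$ supplies the homotopy commutativity needed to exhibit $\nabla$ in $\mathrm{coSpan}_1(k[n+1]\!\downarrow\!\mathcal F\Delta)(\mathcal P,\varnothing)$, and the identification of $\nabla(c_\mathcal P)$ via gluing $\alpha(\eta_\cA)$ and $\beta(\eta_\cB)$ over $\gamma(c)$ is exactly what the paper (tersely) records.

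For part (ii), however, what you have is an outline of a pasting argument rather than an argument, and it misses the essential mechanism that makes the shift from $n$ to $n+1$ appear in the non-degeneracy square. The paper does not paste the non-degeneracy squares of $\phi$ and $\psi$ symmetrically. Instead it exploits the two equivalences asymmetrically: because $\alpha$ is an equivalence, the composite $f_n\mathcal X\to\mathcal G(\nabla)(f_n\mathcal P)\to\mathcal G(\alpha)(f_n\cA)$ yields a rotation of the fiber sequence, giving the identification
\[
\bigl[\mathcal G(\nabla)(f_n\mathcal P)\to\mathcal G(\alpha)(f_n\cA)\bigr]\simeq\bigl[f_n\mathcal X\to\mathcal G(\nabla)(f_n\mathcal P)\bigr][1],
\]
which is precisely where the extra shift $[1]$ (hence $f_n[1]=f_{n+1}$) comes from. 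The left-hand side is then rewritten, using the pushout defining $\mathcal P$, as $[\mathcal G(\beta)(f_n\cB)\to\mathcal G(\alpha\circ\phi)(f_n\Upsilon)]$, and one only invokes the non-degeneracy (co)cartesian square of $\psi$, transported along $\mathcal G(\beta)$; the equivalence $\beta$ is then used a second time to identify $\mathcal G(\beta)(\Delta\cB)\simeq\Delta\mathcal X$. So $\alpha$ and $\beta$ play different roles (one rotates, one transports), and the non-degeneracy of $\phi$ is never needed. Your paragraph gestures at the right ingredients (pushout preserved by $\cG\Delta$, equivalences $\alpha,\beta$, the shift), but leaves the crux --- the fiber-sequence rotation --- unstated, and the "pasting of cartesian squares" you invoke is not the shape the argument actually takes. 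As written there is a genuine gap at exactly the point you yourself flag as "the main obstacle."

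Your proposed reduction to $\cB=\cA$, $\psi=\phi$, $\nabla$ the fold map is reasonable and would indeed streamline bookkeeping, but note it still leaves you with the same task of producing the shift by rotating the fiber sequence; it is not a shortcut around the key step.

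Minor: the sentence claiming that $\nabla$ "corresponds to the commuting square with sides $\phi,\psi,\alpha,\beta$" by the universal property of the pushout is not quite right as stated --- $\nabla$ is the datum; the square with $\alpha,\beta$ is what it induces, and the homotopy filling it is part of what $\xi$ must be reconciled with in (i), as you correctly note afterwards.
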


\begin{proof}
	Let us first summarize the situation with the following diagram:\[\xymatrix{
		&\cA\ar_-{\iota_\cA}[rd]\ar^-\alpha_-[@!-15]{\sim}@/{}^{1pc}/[rrd]&&\\
		\Upsilon\ar^-\phi[ru]\ar_-\psi[rd]&&\mathcal P\ar^\nabla[r]&\mathcal X\\
		&\cB\ar^-{\iota_\cB}[ru]\ar_-\beta^-[@!25]{\sim}@/{}_{1pc}/[rru]&&
	}\]
	Thanks to $\xi$, we have a homotopy commuting diagram\[\xymatrix{
		k[n+1]\ar[r]\ar[d]&\mathcal F\Delta(\mathcal P)\ar[d]\\
		0\ar[r]&\mathcal F\Delta(\mathcal X)}\]
	so that $\nabla\in \mathrm{coSpan}_1(k[n+1]\!\downarrow\!\mathcal F\Delta)(\mathcal P,\varnothing)$.
	
	Let us now prove the non-degeneracy. 
	The composition (recall~\ref{remark-enriched} for the notation $f_\bullet$) \[
	f_n\mathcal X\to\mathcal G(\nabla)(f_n\mathcal P)\to \mathcal G(\alpha)(f_n\cA)\]
	induces 
	\begin{equation}
		\label{keyeq}
		\Big[\mathcal G(\nabla)(f_n\mathcal P)\to \mathcal G(\alpha)(f_n\cA)\Big]\simeq\Big[f_n\mathcal X\to\mathcal G(\nabla)(f_n\mathcal P)\Big][1].
	\end{equation}
	as $\alpha$ is an equivalence.
	The left-hand side is isomorphic to $[\mathcal G(\beta)(f_n\mathcal B)\to \mathcal G(\alpha\circ \phi)(f_n\Upsilon)]$
	by definition of $\mathcal P$, and since $\psi\in\mathbf C_1^n(\Upsilon,\varnothing)$, we have a (co)cartesian square\[\xymatrix{
		f_n\cB\ar[r]\ar[d]&\mathcal G(\psi)(f_n\Upsilon\simeq\Delta\Upsilon)\ar[d]\\
		0\ar[r]&\Delta\cB}\]
	and then \[\xymatrix{
		\mathcal G(\beta)(	f_n\cB))\ar[r]\ar[d]&\mathcal G(\beta\circ\psi)(f_n\Upsilon\simeq\Delta\Upsilon)\ar[d]\\
		0\ar[r]&\mathcal G(\beta)(\Delta\cB)}\]
	where we recognise the left-hand side of~(\ref{keyeq}) on the top row, hence we get a (co)cartesian square\[\xymatrix{
		f_n\mathcal X[1]\ar[r]\ar[d]&\mathcal G(\nabla)(f_n\mathcal P)[1]\ar[d]\\
		0\ar[r]&\mathcal G(\beta)(\Delta\cB)}\]
	as expected, since $f_n[1]=f_{n+1}$, and $\mathcal G(\beta)(\Delta\cB)\simeq\Delta\mathcal X$ in all our examples.
\end{proof}

As a particular case, we have the following analog of~\cite[Corollary 3.5]{ABB}.

\begin{corollary}\label{ncgraph}
	Consider a homotopy commuting triangle \raisebox{.65cm}{\xymatrix{\cA\ar^-{\nu}_-\sim[rr]&&\cB\\&\Upsilon\ar^-\phi[lu]\ar_-\psi[ru]&}} where the upward maps are assumed to belong to $\mathbf C_1^n(\Upsilon,\varnothing)$. Denote by $c:k[n]\to\mathcal F\Delta(\Upsilon)$ the non-degenerate structure on $\Upsilon$. We thus have homotopies $\phi(c)\underset{\eta_\cA}{\sim}0$ and $\psi(c)\underset{\eta_\cB}{\sim}0$. Assume that $\nu(\eta_\cA) \sim \eta_{\cB}$.
	Then $\cA\amalg_\Upsilon\cB\to\cB\in \mathbf C_1^{n+1}(\cA\amalg_\Upsilon\cB,\varnothing)$.
\end{corollary}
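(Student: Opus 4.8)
The plan is to derive Corollary~\ref{ncgraph} as a direct application of Lemma~\ref{ndequiv}, by recognising the homotopy commuting triangle as an instance of the configuration appearing in the lemma, with a carefully chosen target $\mathcal X$ and map $\nabla$.

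First I would set $\mathcal X=\cB$ and define $\nabla:\mathcal P:=\cA\amalg_\Upsilon\cB\to\cB$ to be the canonical map induced by $\nu:\cA\to\cB$ (up to the chosen homotopy $\nu\circ\phi\sim\psi$) on the $\cA$-summand and by $\id_\cB$ on the $\cB$-summand; this is precisely the map whose non-degeneracy we want to establish. Then $\alpha:=\nabla\circ\iota_\cA$ is (homotopic to) $\nu$, which is an equivalence by hypothesis, and $\beta:=\nabla\circ\iota_\cB$ is $\id_\cB$, which is certainly an equivalence. So the two hypotheses of Lemma~\ref{ndequiv} asking that $\alpha$ and $\beta$ be equivalences are satisfied.

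Next I would check the remaining hypothesis of the lemma, namely the existence of a homotopy $\xi$ between $\alpha(\eta_\cA)$ and $\beta(\eta_\cB)$. Since $\beta=\id_\cB$, we have $\beta(\eta_\cB)=\eta_\cB$, and since $\alpha\simeq\nu$, we have $\alpha(\eta_\cA)\simeq\nu(\eta_\cA)$; the assumed homotopy $\nu(\eta_\cA)\sim\eta_\cB$ then supplies the required $\xi$. I would be a little careful here that the homotopy $\nu\circ\phi\sim\psi$ used to build $\nabla$ is compatible (i.e. is the same, or is chosen compatibly) with the one implicitly used in the statement $\nu(\eta_\cA)\sim\eta_\cB$, since $\eta_\cA$ is a homotopy $\phi(c)\sim 0$ and pushing it forward along $\nu$ involves identifying $\nu\phi(c)$ with $\psi(c)$; this bookkeeping of coherences is the one genuinely fiddly point, but it is the kind of thing that is forced once one fixes the data of the triangle. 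With $\xi$ in hand, Lemma~\ref{ndequiv} yields $\nabla\in\mathbf C_1^{n+1}(\mathcal P,\varnothing)=\mathbf C_1^{n+1}(\cA\amalg_\Upsilon\cB,\varnothing)$, which is exactly the conclusion.

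The main obstacle, such as it is, is not in the structure of the argument — which is essentially just "specialise the previous lemma" — but in making sure that the morphism $\nabla:\cA\amalg_\Upsilon\cB\to\cB$ one writes down really is the intended one and that all the homotopies (the triangle-filler, the pushout coherence defining $\nabla$, and $\xi$) fit together into a single coherent piece of data; in an $(\infty,2)$-categorical setting this amounts to producing a map out of a homotopy pushout, which is automatic from its universal property once the compatible cocone is specified. I expect the write-up to be short, consisting mainly of the identification of $(\alpha,\beta,\xi)$ above and an invocation of Lemma~\ref{ndequiv}.
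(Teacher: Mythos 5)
Your argument is exactly what the paper intends: the corollary is stated as a particular case of Lemma~\ref{ndequiv}, and specialising to $\mathcal X=\cB$, $\nabla:\cA\amalg_\Upsilon\cB\to\cB$ induced by $\nu$ and $\id_\cB$, with $\alpha\simeq\nu$, $\beta=\id_\cB$, and $\xi$ supplied by $\nu(\eta_\cA)\sim\eta_\cB$, is the intended specialisation. Your proposal is correct and matches the paper's (unwritten) proof.
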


We are ready to prove the following, which specifies to~\cite[Lemma 5.33]{BCS} in the Calabi--Yau case.

\begin{proposition}
	Consider $\mathcal Z\in\scat{C}^n_1(\mathcal X,\mathcal Y)$, $\mathcal U\in\scat{C}^n_1(\varnothing,\mathcal X)$ and $\mathcal V\in\scat{C}^n_1(\mathcal X,\varnothing)$. Then the cospan\[
	\mathcal U\coprod_{\mathcal X}\mathcal V
	\rightarrow
	\mathcal U\coprod_{\mathcal X}\mathcal Z\coprod_{\mathcal X}\mathcal V
	\leftarrow
	\mathcal U\coprod_{\mathcal X}\mathcal Z\coprod_{\mathcal Y}\mathcal Z\coprod_{\mathcal X}\mathcal V
	\]
	belongs to $\scat{C}^{n+1}_1$. 
\end{proposition}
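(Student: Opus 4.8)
The plan is to deduce this from Corollary~\ref{ncgraph}, by exhibiting the cospan as a whiskering of the ``codiagonal'' $2$-morphism attached to $\mathcal Z$ along its $\mathcal Y$-leg.

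\textbf{Unwinding the cospan.} Write $\mathcal P:=\mathcal U\coprod_{\mathcal X}\mathcal Z\coprod_{\mathcal X}\mathcal V$ for the apex. Computing the colimits gives a canonical equivalence $\mathcal P\simeq(\mathcal U\coprod_{\mathcal X}\mathcal V)\coprod_{\mathcal X}\mathcal Z$ (with $\mathcal X\to\mathcal U\coprod_{\mathcal X}\mathcal V$ the tautological map and $\mathcal X\to\mathcal Z$ the leg of $\mathcal Z$), under which the left-hand leg $\mathcal U\coprod_{\mathcal X}\mathcal V\to\mathcal P$ of our cospan is the pushout inclusion and the right-hand leg $\mathcal U\coprod_{\mathcal X}\mathcal Z\coprod_{\mathcal Y}\mathcal Z\coprod_{\mathcal X}\mathcal V\to\mathcal P$ is induced by the fold $\mathcal Z\coprod_{\mathcal Y}\mathcal Z\to\mathcal Z$ together with $\mathrm{id}_{\mathcal U}$ and $\mathrm{id}_{\mathcal V}$. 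Thus the whole cospan is the image under the horizontal composition $\mathcal U\circ(-)\circ\mathcal V$ of the cospan
\[\mathcal Z\coprod_{\mathcal Y}\mathcal Z\;\xrightarrow{\ \mathrm{fold}\ }\;\mathcal Z\;\xleftarrow{\ \ }\;\mathcal X,\]
namely of the $2$-morphism $\mathcal Z^{\op}\circ\mathcal Z\Rightarrow\mathrm{id}_{\mathcal X}$ whose $2$-cell is $\mathcal Z$: capping the two $\mathcal X$-legs of $\mathcal Z\coprod_{\mathcal Y}\mathcal Z$ by $\mathcal U$ and $\mathcal V$ gives $\mathcal U\coprod_{\mathcal X}\mathcal Z\coprod_{\mathcal Y}\mathcal Z\coprod_{\mathcal X}\mathcal V$, capping the single $\mathcal X$-leg of the $2$-cell $\mathcal Z$ by both $\mathcal U$ and $\mathcal V$ gives $\mathcal P$, and capping $\mathcal X$ gives $\mathcal U\coprod_{\mathcal X}\mathcal V$.

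\textbf{Reduction and application of Corollary~\ref{ncgraph}.} Since the horizontal composition laws of the iterated non-degenerate cospan $(\infty,m)$-categories are well defined, these categories being stable under composition (this is part of the content of the constructions of Section~2), it suffices to show that the $2$-morphism above is non-degenerate, i.e.\ that $\mathcal Z\coprod_{\mathcal Y}\mathcal Z\to\mathcal Z$ defines a morphism of $\mathbf C^{n+1}_1$ relatively to the $\mathcal X$-boundary (equivalently, a $2$-cell of $\mathbf C^{n}_2$); the whiskering by $\mathcal U$ and $\mathcal V$ then precisely caps that boundary off. For this I would apply the variant of Corollary~\ref{ncgraph} relative to $\mathcal X$ with $\Upsilon:=\mathcal Y$, with $\cA:=\cB:=\mathcal Z$ viewed through its $\mathcal Y$-leg $b\colon\mathcal Y\to\mathcal Z$ as an object of $\mathbf C^n_1(\mathcal Y,\varnothing)$ relative to $\mathcal X$ (legitimate as $\mathcal Z\in\mathbf C^n_1(\mathcal X,\mathcal Y)$), with $\phi:=\psi:=b$, with $\nu:=\mathrm{id}_{\mathcal Z}$, and with $\eta_{\cA}=\eta_{\cB}$ the component of the non-degeneracy structure of $\mathcal Z$ trivialising $b(c_{\mathcal Y})$ in $\mathcal F\Delta(\mathcal Z)$. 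The homotopy-commutativity of the triangle and the compatibility $\nu(\eta_{\cA})\sim\eta_{\cB}$ hold tautologically since $\nu=\mathrm{id}$, so Corollary~\ref{ncgraph} gives $\mathcal Z\coprod_{\mathcal Y}\mathcal Z\to\mathcal Z\in\mathbf C^{n+1}_1(\mathcal Z\coprod_{\mathcal Y}\mathcal Z,\varnothing)$ relative to $\mathcal X$; the remaining (co)cartesian square is handled exactly as at the end of the proof of Lemma~\ref{ndequiv}, using that $\mathcal G(\mathrm{id}_{\mathcal Z})(\Delta\cB)\simeq\Delta\mathcal Z$ in our examples.

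\textbf{Main obstacle.} The delicate part is not the last step but the identification: realising the given cospan precisely as the whiskering $\mathcal U\circ(-)\circ\mathcal V$ of the codiagonal $2$-cell inside the $(\infty,m)$-categories, where compositions are defined only up to coherent homotopy, and checking that applying Corollary~\ref{ncgraph} genuinely commutes with this whiskering — that is, that capping off the $\mathcal X$-boundary by the non-degenerate cospans $\mathcal U$ and $\mathcal V$ does not destroy the non-degeneracy the Corollary produces. That is exactly what the functoriality set up in Section~2 provides.
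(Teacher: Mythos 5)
Your overall strategy is exactly the paper's: recognise the given cospan as the horizontal whiskering, by $\mathcal U$ and $\mathcal V$, of the ``codiagonal'' $2$-cell $1_{\mathcal X}\Rightarrow\mathcal Z\circ\mathcal Z$ whose apex is $\mathcal Z$ itself, and deduce non-degeneracy from Corollary~\ref{ncgraph}. The difference is in how you feed the data into the corollary. You want to apply a ``variant of Corollary~\ref{ncgraph} relative to $\mathcal X$'' with $\Upsilon=\mathcal Y$, $\cA=\cB=\mathcal Z$ seen only through the $\mathcal Y$-leg, and then fix up a remaining (co)cartesian square by hand. No such relative variant is stated or proved in the paper, and, as written, the map $\mathcal Y\to\mathcal Z$ alone is not an object of $\mathbf C^n_1(\mathcal Y,\varnothing)$ since the non-degenerate structure on $\mathcal Z$ involves the full boundary $\mathcal X\amalg\mathcal Y$; so that step is a genuine gap in your write-up.

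The paper avoids the detour by simply taking $\Upsilon=\mathcal X\amalg\mathcal Y$, $\cA=\cB=\mathcal Z$, $\phi=\psi=(\mathcal X\amalg\mathcal Y\to\mathcal Z)$ and $\nu=\mathrm{id}_{\mathcal Z}$ in Corollary~\ref{ncgraph} as it stands. This gives $\mathcal Z\amalg_{\mathcal X\amalg\mathcal Y}\mathcal Z\to\mathcal Z\in\mathbf C^{n+1}_1$ directly, and the canonical equivalence
\[
\mathcal X\amalg_{\mathcal X\amalg\mathcal X}\bigl(\mathcal Z\amalg_{\mathcal Y}\mathcal Z\bigr)\;\simeq\;\mathcal Z\amalg_{\mathcal X\amalg\mathcal Y}\mathcal Z
\]
is precisely what identifies this with the looped-down presentation of the $2$-cell $1_{\mathcal X}\Rightarrow\mathcal Z\circ\mathcal Z$. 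So the $\mathcal X$-boundary is dealt with inside the corollary rather than postponed to a relative variant and a supplementary square. With that adjustment your argument matches the paper's; everything else (the identification of the cospan as a whiskering, the appeal to stability of the non-degenerate cospan categories under horizontal composition) is as in the paper.
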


\begin{proof}
	We identify $\mathcal Z$ with $\mathcal Z^\mathrm{op}\in\scat{C}^n_1(\mathcal Y,\mathcal X)$ and consider $\mathcal Z\circ\mathcal Z\in\scat{C}^n_1(\mathcal X,\mathcal X)$. We can see $\mathcal Z$ as a $2$-cell $1_\mathcal X\Rightarrow\mathcal Z\circ\mathcal Z$, where $1_\mathcal X\in \scat{C}_1^n(\mathcal X,\mathcal X)$ is the codiagonal, via\[
	\xymatrix{
		&\mathcal X\ar[d]&\\
		\mathcal X\ar[ru]\ar[rd]&\mathcal Z&\mathcal X,\ar[lu]\ar[ld]\\
		&\mathcal Z\coprod_{\mathcal Y}\mathcal Z\ar[u]&
	}
	\] 
	as \[\left(
	\mathcal Z\leftarrow\mathcal X\coprod_{\mathcal X\coprod \mathcal X}\mathcal Z\coprod_{\mathcal Y}\mathcal Z\simeq \mathcal Z\coprod_{\mathcal X\coprod\mathcal Y}\mathcal Z\right)\in\mathbf C^{n+1}_1\]
	thanks to Corollary~\ref{ncgraph} (with $\phi=\psi=(\mathcal X\amalg\mathcal Y\to\mathcal Z)$ and $\nu=\mathrm{id}_\mathcal Z$).
	Pre and post horizontal compositions by $\mathcal V$ and $\mathcal U$ yield a $2$-cell \[
	\begin{tikzcd}
		\displaystyle  \varnothing \arrow[rrrrrrr, bend left=40, "{\displaystyle\mathcal U\circ1_\mathcal X\circ\mathcal V}", ""{name=U,inner sep=3pt,below}]
		\arrow[rrrrrrr, bend right=40, "\displaystyle\mathcal U\circ\mathcal Z\circ\mathcal Z\circ \mathcal V"{below}, ""{name=D,inner sep=3pt}]
		& &&&&&&\varnothing
		\arrow[Rightarrow, from=U, to=D, "~\displaystyle\mathcal U\circ \mathcal Z \circ \mathcal V"]
	\end{tikzcd}
	\]
	which gives the result.
\end{proof}


\end{document}